
\documentclass[letterpaper, 10 pt, conference]{ieeeconf}  

\IEEEoverridecommandlockouts                              
\overrideIEEEmargins

\usepackage{graphicx} 
\usepackage{subcaption}
\usepackage{url}
\usepackage{amsmath} 
\usepackage{amssymb}  
\usepackage{algorithm}
\usepackage[noend]{algorithmic}
\usepackage{xcolor} 


\def\ddefloop#1{\ifx\ddefloop#1\else\ddef{#1}\expandafter\ddefloop\fi}

\def\ddef#1{\expandafter\def\csname c#1\endcsname{\ensuremath{\mathcal{#1}}}}
\ddefloop ABCDEFGHIJKLMNOPQRSTUVWXYZ\ddefloop

\def\ddef#1{\expandafter\def\csname s#1\endcsname{\ensuremath{\mathsf{#1}}}}
\ddefloop ABCDEFGHIJKLMNOPQRSTUVWXYZ\ddefloop

\def\ddef#1{\expandafter\def\csname b#1\endcsname{\ensuremath{\mathbb{#1}}}}
\ddefloop ABCDEFGHIJKLMNOPQRSTUVWXYZ\ddefloop
    
\DeclareMathOperator*{\argmax}{arg\,max}

\newtheorem{thm}{Theorem}
\newtheorem{lem}[thm]{Lemma}
\newtheorem{prop}[thm]{Proposition}

\newtheorem{assum}[thm]{Assumption}
\newtheorem{defn}[thm]{Definition}



\title{\LARGE \bf Revisiting Regret Benchmarks in Online Non-Stochastic Control 

}


\author{Vijeth Hebbar and C\'edric Langbort 
\thanks{This work was supported by the \textit{ARO} MURI grant W911NF-20-0252 (76582 NSMUR).}
\thanks{All authors are with the Coordinated Science Lab, University of Illinois at Urbana–Champaign,          Urbana, IL 61801, USA.
        {\tt\small \{vhebbar2,langbort\}@illinois.edu}}%
}

\begin{document}

\maketitle
\thispagestyle{empty}
\pagestyle{empty}

\begin{abstract}
In the online non-stochastic control problem, an agent sequentially selects control inputs for a linear dynamical system when facing unknown and adversarially selected convex costs and disturbances. A common metric for evaluating control policies in this setting is policy regret, defined relative to the best-in-hindsight linear feedback controller. However, for general convex costs, this benchmark may be less meaningful since linear controllers can be highly suboptimal. To address this, we introduce an alternative, more suitable benchmark—the performance of the best fixed input. We show that this benchmark can be viewed as a natural extension of the standard benchmark used in online convex optimization and propose a novel online control algorithm that achieves sublinear regret with respect to this new benchmark. We also discuss the connections between our method and the original one proposed by Agarwal et al. in their seminal work introducing the online non-stochastic control problem, and compare the performance of both approaches through numerical simulations. 

\end{abstract}


\section{INTRODUCTION}
The interface between \emph{online learning} and \emph{control theory} has become an active area of research, driven by the need to make sequential decisions in dynamic and uncertain environments \cite{li2021online, shi_online_2020, nonhoff_online_2024,jiang_online_2025}. In this paper, we revisit a control problem that lies within this intersection and has garnered increased attention in recent years; the online \emph{non-stochastic} control problem \cite{hazan_introduction_2025}. Our contributions are twofold. First, we introduce a new notion of regret, based on a novel benchmark,  that provides a more meaningful metric for evaluating performance in this setting. Second, we propose an online control algorithm that achieves sublinear regret with respect to this benchmark.


In the online non-stochastic control problem, we are interested in controlling a linear dynamical system when faced with a sequence of \emph{unknown} and \emph{adversarially} picked convex cost functions and disturbances. Specifically, we consider the following 
linear time-invariant system
\begin{equation} \label{eq:LTI-sys-disturbed}
x_{t+1} = A x_t + B u_t + w_t, \quad t\geq 1,
\end{equation}
and are interested in designing a learning algorithm $\cA$ that picks inputs $\{u_t\}$ to minimize the cumulative cost\footnote{We consider input-independent costs for ease of exposition, but our methods can be easily extended to input-dependent costs as described in Appendix \ref{app:input-dependent}.}
\begin{flalign*}
    J(\cA) = \sum_{t=1}^T f_t(x_t).
\end{flalign*}
Crucially, the learner has to take action $u_t$ at time $t$ before any information about the cost function $f_t$ or the disturbance $w_t$ is revealed. Consequently, there is no way for the learner to compute the optimal controller. Instead, the standard metric of performance used in such scenarios is \emph{policy regret}, which compares the cumulative cost incurred by the algorithm against the performance of the best-in-hindsight policy from a certain policy class.





\paragraph{Background} This setup was first considered by Agarwal et al. \cite{agarwal_online_2019} and has since been extended in a multitude of directions -- consideration of strongly convex costs \cite{agarwal_logarithmic_2019}, partial observability of state \cite{simchowitz_improper_2020}, consideration of safety constraints \cite{li2021online,jiang_online_2025} 
and more \cite{cassel_rate-optimal_2022,cassel2020bandit}. In all these works, the policy class from which the best-in-hindsight controller is picked is the set of all \emph{stabilizing} linear feedback controllers $\cK$. Formally, the regret is defined as 
\begin{flalign} \label{eq:regret_constant_K}
    \cR_{\cK} (T) = \sum_{t=1}^T f_t(x_t) - \min_{K\in \cK} \sum_{t=1}^T f_t(x_t^K)
\end{flalign}
where $\{x_t^K\}_{t=1}^T$ is the state trajectory generated by the time-invariant linear controller $K$, i.e., $u_t = K x_t$ for all $t$.

The primary motivation for choosing this benchmark class is rooted in the classical linear quadratic regulator (LQR) problem: when costs are fixed quadratic functions and the horizon is infinite, the optimal controller is indeed a fixed linear feedback controller. Now, for a finite-horizon LQR problem with time-varying cost functions, such a fixed linear controller may perform poorly. However, the optimal controller for a finite horizon LQR is ``non-causal" in the sense that computing the controller $K_t$ at time $t$ requires the knowledge of future costs $\{Q_\tau,R_\tau\}_{\tau\geq t}$. Since this information is not available in an online learning setup, it is impractical and unfair to compare the learner's performance against the optimal (yet unattainable) benchmark. Consequently, a fixed linear feedback has become the standard benchmark in online LQR literature~\cite{cohen_online_2018}.
When considering general convex functions, this benchmark of a fixed linear feedback controller becomes substantially less meaningful. To clearly illustrate this, consider a simplified scenario where no disturbances act on the system ($\{w_t\} \equiv \mathbf{0}$), and suppose the cost functions $\{f_t\}$ all attain their minimum at some fixed non-zero state $x^*$.  
Any stabilizing linear feedback controller will quickly drive the system to the origin, leading to a cumulative cost close to $\sum_t f_t(0)$. In contrast, a controller that stabilizes the system at or near $x^*$ would yield significantly lower cumulative costs. The primary motivation for retaining linear feedback controllers as a benchmark is that we recover established regret guarantees when applying the developed approach to an LQR setup. 
However, prompted by the limitations of linear controller benchmarks when facing general convex costs, this paper seeks to explore alternative and more suitable benchmarks.

A natural option -- common in the online convex optimization (OCO) literature -- is the optima of the cumulative cost function. Mapping this to our setting, we could propose the following notion of regret: \vspace{-0.15cm}
\begin{flalign}
\cR_x(T) \triangleq \sum_{t=1}^T f_t(x_t) - \min_{x \in \cX} \sum_{t=1}^T f_t(x), \vspace{-0.15cm}\label{eq:regret_x_intro}
\end{flalign} 
where \( \cX \) denotes the set of steady-states achievable via constant inputs (formally defined in Definition~\ref{def:ss_set}). 
This regret definition is particularly appealing because it captures how well the learner performs relative to an idealized controller that instantly stabilizes the system at the optimal steady-state \( x^* \in \cX \). This is a powerful notion of regret since it is identical to the one in standard online convex optimization setups - where decisions $\{x_t\}$ are chosen independently from $\cX$ without any coupling across time. Thus, achieving low regret against this benchmark means that, despite operating under system dynamics, the learner performs almost as well as if the dynamics posed no constraints. 

However, directly using this benchmark can be challenging when the system is subject to disturbances. In such settings, stabilizing to a single point is not feasible since the disturbance continually pushes the system around. To address this, we work with an alternate regret definition that compares performance against the best constant input: \vspace{-0.15cm}
\begin{flalign}
\cR_u(T) \triangleq \sum_{t=1}^T f_t(x_t) - \min_{u \in \cU} \sum_{t=1}^T f_t(x_t^u), \vspace{-0.15cm} \label{eq:regret_u_intro}
\end{flalign}
where \( \{x_t^u\} \) is the trajectory induced by applying the fixed input \( u \) at all times. Crucially, we show that in the absence of disturbances, the two regret definitions in (\ref{eq:regret_x_intro}) and (\ref{eq:regret_u_intro}) are \emph{equivalent} in some sense (see Lemma~\ref{lem:equiv-regret}), thus offering a meaningful interpretation for $\cR_u$ as well.  However, this regret formulation remains meaningful even in the presence of disturbance. We employ both these regret notions in our work. 



\paragraph{Other Relevant Literature} A closely related line of work within the control literature considers \emph{dynamic regret}, which compares the performance of the learner against that of the optimal clairvoyant controller (i.e., a controller informed by complete knowledge of future disturbances and costs). This is, of course, the strongest benchmark against which any online control policy can be compared. However, unlike \emph{static} regret bounds (which we focus on in this work), dynamic regret bounds are typically instance-dependent, meaning they depend explicitly on the sequence of costs encountered. Moreover, these results often rely on additional assumptions. For example, Li et al. \cite{li_online_2019} assume access to a finite look-ahead window of upcoming cost functions and design a controller that guarantees dynamic regret bounds that scale with the total variation in the sequence of point-wise optima i.e. $\{\argmax f_t(x)\}$.  Nonhoff et al \cite{nonhoff_online_2024} also consider dynamic regret, but use a weaker benchmark — specifically, the sequence of point-wise optima restricted to the steady-state manifold (see Definition \ref{def:ss_set}). They also obtain regret bounds that depend on the total variation of these optima. Our focus in this work is to derive instance-independent bounds on the static regret of our problem. 



\paragraph{Roadmap} In Section \ref{sec:setup}, we formalize our problem setup and the notion(s) of regret we employ. Section \ref{sec:alg+regret} proposes a novel online control policy for online non-stochastic control and presents associated regret bounds. We first focus on a simplified setup (Section \ref{sec:undisturbed_sec}) where disturbances are absent from our system and develop an algorithmic approach for it. 
Then in Section \ref{sec:disturbed_sys}, we reintroduce the disturbances in our setup and show a key reduction of our problem to an equivalent disturbance-free formulation. Leveraging this reduction allows us to directly extend the previous solution approach to this more general scenario with disturbances. 


In Section \ref{sec:compare}, we explore connections and differences between our approach and the one taken by Agarwal et al. \cite{agarwal_online_2019} in solving the online non-stochastic control problem. Specifically, we compare the policy classes and the benchmarks employed in both works. Finally, in Section \ref{sec:simu} we present some numerical simulation results and observe that the proposed approach achieves lower cumulative costs that the method described by Agarwal et al. \cite{agarwal_online_2019}  .  

\section{Problem Setup} \label{sec:setup}




Let $x_t\in \bR^N$ denote the state of a system and let $u_t\in\cU\subset\bR^M$ denote the input provided by the learner at time $t$. The system is governed by the linear time-invariant dynamics outlined in (\ref{eq:LTI-sys-disturbed}), where $A$ and $B$ are real matrices of appropriate dimension and $w_t\in\cW\subset\bR^N$ denotes the disturbance picked by an adversary at time $t$. The learner faces a sequence of \emph{convex} cost functions $\{f_t\}_{t=1}^T$, also picked by an adversary. We denote the gradient $\nabla f_t(x_t)$ as $\delta_t$.  

In every time step $t$, the following sequence of events takes place: the learner first observes the current state $x_t$, then they pick an action $u_t$, then they incur cost $f_t(x_t)$ and receive the gradient $\delta_t$ as feedback, and finally, the system evolves to the next state according to (\ref{eq:LTI-sys-disturbed}). So, we are considering a setup where the learner receives \emph{gradient feedback}, and uses it to pick inputs that yield low cumulative cost.  


\subsection{Assumptions} \label{sec:assum}
Throughout this paper, we assume the following.

\begin{assum}[Strong Stability] There exist constants $\kappa>0$ and $\gamma \in (0,1]$,     and matrices $H$ and $J$ such that $A = H J H^{-1}$ with $\|J\| \leq 1- \gamma$. \label{ass:strong_stab}
\end{assum}

Here $\|\cdot\|$ denotes the spectral norm of a matrix. We also use the same notation to denote the $2$-norm of a vector, and the norm we employ will be clear from the context.

Note that Assumption \ref{ass:strong_stab} is not particularly restrictive.  Strong stability is merely a quantitative refinement of Schur stability and every Schur stable matrix is strongly stable for some $\gamma$ and $\kappa$. A more detailed discussion on this equivalence can be found in Cohen et al. \cite{cohen_online_2018}. Moreover, this assumption can be further relaxed by requiring only that the system $(A,B)$ is stabilizable. In this case, one could select a stabilizing feedback matrix $K$ such that the closed-loop system $\tilde{A} \triangleq A - BK$ satisfies the strong stability condition stated in Assumption \ref{ass:strong_stab}. The system can then be rewritten as
\[x_{t+1} = A x_t + B (-Kx + \tilde{u}_t) + w_t= \tilde{A} x_t + B\tilde{u}_t + w_t, \]
and our goal would be to design the control input $\tilde{u}_t$ to achieve the desired performance. We make Assumption \ref{ass:strong_stab} because it allows us to significantly improve the readability and clarity of our regret analysis, but as argued above, it is also without much loss of generality. We also make the following assumption:



\begin{assum} \label{ass:input-bd}
    The input set $\cU$ and disturbance set $\cW$ are bounded. 
\end{assum}

Note that strong stability of our system also implies that it is stable (spectral radius $\rho(A) = \rho(J) \leq \|J\| < 1$), which in turn implies BIBO stability. As a consequence, under Assumption \ref{ass:input-bd}, the state $x_t$ at any time $t$ remains bounded, and we will employ $D$ to denote the bound on $\|x_t\|$. Note that the value of $D$ may depend on the initial state $x_1$ and on the bounds on the sets $\cU$ and $\cW$. Finally, we make the following assumption:



\begin{assum}[Smoothness of costs] \label{ass:smooth-cost}
    \[\|\nabla f(x)\| \leq L D \quad \forall \; x \;\text{ S.T }\|x\| \leq D\] 
\end{assum}



\subsection{Notion of Regret}
We now formalize the performance metrics used to evaluate our online control policy. We begin by defining the set of steady-states achievable under constant inputs in the absence of disturbances:
\begin{defn}[Steady State Manifold] \label{def:ss_set}
    \[ \cX = \{x\ \in \bR^N \; | \; x = A x + B u, \quad u\in \cU\}.\]
\end{defn}
Note that this set is well defined under our strong-stability assumption (Assumption \ref{ass:strong_stab}), which ensures that $I-A$ is invertible. Now suppose that our learner picks the sequence of input $\{u_t\}_{t=1}^{T-1}$, generating a corresponding state trajectory $\{x_t\}_{t=11}^T$. Then, the \emph{steady-state regret} incurred by the learner is defined as  \vspace{-0.2cm}
\begin{flalign}
    \cR_{x}(T) \triangleq \sum_{t=1}^T f_t(x_t) - \min_{x\in \cX} \sum_{t=1}^T f_t(x). \vspace{-0.1cm} \label{eq:reg-defn-x}
\end{flalign}
As discussed in the introduction, while this notion of regret is quite meaningful and powerful, it may not be appropriate when disturbances act on the system. In practice, achieving this benchmark may be impossible even in the disturbance-free case and with complete knowledge of the cost functions because the initial state $x_1$ of the system might differ from the optimal state $x^*$. A more attainable benchmark considers the cumulative cost incurred by the best time-invariant input and accordingly, we can propose the \emph{constant input regret} as 
\begin{flalign}
    \cR_{u}(T) \triangleq \sum_{t=1}^T f_t(x_t) - \min_{u\in \cU} \sum_{t=1}^T f_t(x_t^u) \label{eq:reg-defn-u}
\end{flalign} 
where $\{x_t^u\}$ denote the state trajectory starting from $x^u_1$ under the time-invariant input $u_t=u$. However, as formalized in the Lemma \ref{lem:equiv-regret} below, any online control policy that performs well with respect to the regret definition in (\ref{eq:reg-defn-x}) also does so with respect to the one in (\ref{eq:reg-defn-u}).

\begin{lem} \label{lem:equiv-regret} Under the assumption that disturbances are absent i.e. $\{w_t\} \equiv \mathbf{0}$, we have \vspace{-0.2cm}
    \[\cR_u(T) - \cR_x(T) \leq \frac{2 \kappa LD^2 }{\gamma} \in o(T)\]
\end{lem}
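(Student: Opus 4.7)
The plan is to bound $\cR_u(T) - \cR_x(T)$ by exploiting the fact that the constant-input trajectory that stabilizes to the optimal steady state differs from that steady state only by a geometrically decaying transient.

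First, I would observe that the $\sum_t f_t(x_t)$ terms cancel in the difference, leaving
\[ \cR_u(T) - \cR_x(T) = \min_{x \in \cX}\sum_{t=1}^T f_t(x) - \min_{u \in \cU} \sum_{t=1}^T f_t(x_t^u). \]
Let $x^\star \in \arg\min_{x \in \cX}\sum_t f_t(x)$ and let $u^\star \in \cU$ be the (well-defined, by Assumption~\ref{ass:strong_stab}) constant input satisfying $x^\star = A x^\star + B u^\star$. Using $u^\star$ as a suboptimal choice in the second minimum gives
\[ \cR_u(T) - \cR_x(T) \leq \sum_{t=1}^T \bigl( f_t(x_t^{u^\star}) - f_t(x^\star) \bigr). \]

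Next, I would quantify how fast $x_t^{u^\star}$ approaches $x^\star$. Subtracting the fixed-point relation $x^\star = A x^\star + B u^\star$ from $x_{t+1}^{u^\star} = A x_t^{u^\star} + B u^\star$ (with $w_t \equiv \mathbf{0}$) yields $x_{t+1}^{u^\star} - x^\star = A(x_t^{u^\star} - x^\star)$, hence $x_t^{u^\star} - x^\star = A^{t-1}(x_1 - x^\star)$. Invoking Assumption~\ref{ass:strong_stab} via $A = HJH^{-1}$ with $\|J\| \leq 1-\gamma$ and $\|H\|\|H^{-1}\| \leq \kappa$, I get $\|A^{t-1}\| \leq \kappa (1-\gamma)^{t-1}$, and using the a priori bound $\|x_1\|, \|x^\star\| \leq D$ (implied by Assumption~\ref{ass:input-bd} and BIBO stability),
\[ \|x_t^{u^\star} - x^\star\| \leq 2 \kappa D (1-\gamma)^{t-1}. \]

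Finally, the Lipschitz property implied by Assumption~\ref{ass:smooth-cost} (every $f_t$ is $LD$-Lipschitz on the ball of radius $D$) gives $f_t(x_t^{u^\star}) - f_t(x^\star) \leq LD \|x_t^{u^\star} - x^\star\|$. Summing a geometric series,
\[ \sum_{t=1}^T \bigl( f_t(x_t^{u^\star}) - f_t(x^\star) \bigr) \leq 2\kappa L D^2 \sum_{t=1}^T (1-\gamma)^{t-1} \leq \frac{2\kappa L D^2}{\gamma}, \]
which is the desired bound. Since the right-hand side is a constant independent of $T$, it is trivially $o(T)$.

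The only subtle point, and likely the main thing to justify carefully, is that $x_t^{u^\star}$ and the reference trajectory share the same initial state $x_1$ (so that the transient decays from a bounded initial gap) and that all states remain in the ball of radius $D$ on which Assumption~\ref{ass:smooth-cost} applies; both follow from the conventions set up in Section~\ref{sec:setup} and the BIBO argument following Assumption~\ref{ass:input-bd}. Everything else is a direct concatenation of strong stability with Lipschitz continuity.
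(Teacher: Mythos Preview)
Your approach mirrors the paper's exactly: cancel the common $\sum_t f_t(x_t)$ term, compare the two minima by plugging in a particular competitor, use strong stability to get geometric decay of the transient $x_t^{u^\star}-x^\star$, and sum the geometric series. However, your second displayed inequality is in the wrong direction. You have
\[
\cR_u(T)-\cR_x(T)=\sum_{t} f_t(x^\star)-\min_{u\in\cU}\sum_{t} f_t(x_t^{u}),
\]
and substituting the particular choice $u^\star$ into the $\min$ can only make the subtracted quantity \emph{larger}, hence gives a \emph{lower} bound on $\cR_u-\cR_x$, not the upper bound you claim. (The paper's own proof, incidentally, writes the left-hand side as $\cR(T)-\cR^u(T)$, i.e.\ $\cR_x-\cR_u$, so its chain of inequalities is internally consistent but establishes the opposite direction from what the lemma states.)

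The repair is immediate and uses precisely the ingredients you already assembled: instead take $u^\star\in\arg\min_{u\in\cU}\sum_t f_t(x_t^u)$ and let $x^\star=(I-A)^{-1}Bu^\star\in\cX$ be its steady state. Then $\min_{x\in\cX}\sum_t f_t(x)\le\sum_t f_t(x^\star)$, so
\[
\cR_u(T)-\cR_x(T)\;\le\;\sum_{t=1}^{T}\bigl(f_t(x^\star)-f_t(x_t^{u^\star})\bigr)\;\le\; LD\sum_{t=1}^{T}\|x^\star-x_t^{u^\star}\|,
\]
after which your strong-stability bound $\|x_t^{u^\star}-x^\star\|\le 2\kappa D(1-\gamma)^{t-1}$ and the geometric-series estimate go through verbatim to give $2\kappa LD^2/\gamma$. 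Everything else in your write-up is fine.
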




We will use the regret definition in (\ref{eq:reg-defn-x}) when presenting the regret bounds in Section \ref{sec:undisturbed_sec} which considers the system without disturbances. Then in Section \ref{sec:disturbed_sys}, we extend our approach to also address the adversarial disturbances, and present regret bounds in terms of the definition in (\ref{eq:reg-defn-u}).  

\section{Learning Algorithm and Regret Bounds} \label{sec:alg+regret}
\subsection{System without disturbance}\label{sec:undisturbed_sec}
For the purpose of this section alone, we make the assumption that there are no disturbances acting on our system i.e. $\{w_t\} \equiv \mathbf{0}$. The online control policy we propose is presented in Algorithm \ref{alg:OLC}. 
This algorithm keeps track of a `target state' $z_t$, which is updated through an online projected gradient descent update in every step (Line \ref{lin:projection}). It then computes and returns the input that would stabilize the system at this target state (Line \ref{lin:invert_z}). \vspace{-0.2cm}
 \begin{algorithm}[H]
 \caption{Online Linear Control Algorithm}
 \begin{algorithmic}[1]
 \renewcommand{\algorithmicrequire}{\textbf{Input:}}
 \REQUIRE Step size $\eta$, Initial state $x_1$ \\
    \textit{Initialization:} $z_1 \in {\cX}$ \\
     \FOR{$t=1,\cdots,T-1$}
        \STATE Pick $u_t$ S.T. $Bu_t = (I-A)z_t$ \label{lin:invert_z} 
        \STATE Incur cost $f_t(x_t)$, Record gradient $\delta_t = \nabla f_t(x_t)$ \label{lin:feedback} 
        \STATE Update target state $z_{t+1} = \Pi_{\cX} \big(z_t-\eta \delta_t)$ \label{lin:projection}
        \STATE Update the state $x_{t+1} = A x_t + B u_t$ \label{lin:x_update}
     \ENDFOR
 \end{algorithmic} 
 \label{alg:OLC}
 \end{algorithm}
\vspace{-0.2cm}
We can state the following theorem bounding the regret incurred by our algorithm.
\begin{thm} \label{thm:undist_regret_bound}
    Picking inputs according to Algorithm \ref{alg:OLC} with $\eta = \frac{2\gamma}{L\sqrt{T(1+4\kappa^2)}}$ guarantees a regret bound of \vspace{-0.2cm}
    \begin{flalign*}
        \cR_x(T)  \leq \frac{2LD^2}{\gamma}\bigg(\sqrt{T(1+4\kappa^2)}+\kappa\bigg).  
    \end{flalign*} 
\end{thm}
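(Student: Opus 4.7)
The plan is to decompose the regret via convexity and then handle the two resulting pieces separately: one piece is the regret of an online projected gradient descent (OGD) algorithm on $\cX$, while the other accounts for the tracking error between the actual state and the target state. Let $x^{\star} \in \argmin_{x \in \cX}\sum_t f_t(x)$ and $e_t \triangleq x_t - z_t$. By convexity of each $f_t$,
\begin{flalign*}
    \cR_x(T) \leq \sum_{t=1}^T \langle \delta_t, x_t - x^{\star}\rangle = \sum_{t=1}^T \langle \delta_t, z_t - x^{\star}\rangle + \sum_{t=1}^T \langle \delta_t, e_t\rangle.
\end{flalign*}

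First, I would bound the OGD piece. Line \ref{lin:projection} is exactly the projected gradient descent update $z_{t+1} = \Pi_{\cX}(z_t - \eta \delta_t)$ with comparator $x^{\star} \in \cX$, so expanding $\|z_{t+1} - x^{\star}\|^2$ and using non-expansiveness of $\Pi_{\cX}$ yields the standard estimate
\begin{flalign*}
    \sum_{t=1}^T \langle \delta_t, z_t - x^{\star}\rangle \leq \frac{\|z_1 - x^{\star}\|^2}{2\eta} + \frac{\eta}{2}\sum_{t=1}^T \|\delta_t\|^2 \leq \frac{2D^2}{\eta} + \frac{\eta L^2 D^2 T}{2},
\end{flalign*}
where the last step uses $\|z_1 - x^{\star}\| \leq 2D$ (both iterates lie in $\cX \subseteq \{x:\|x\|\leq D\}$) and Assumption \ref{ass:smooth-cost}.

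Next, I would control the tracking piece. Substituting $B u_t = (I-A) z_t$ into the state update gives the linear recursion $e_{t+1} = A e_t + (z_t - z_{t+1})$; Assumption \ref{ass:strong_stab} supplies $\|A^k\| \leq \kappa (1-\gamma)^k$, and non-expansiveness of the projection onto $\cX$ forces $\|z_{t+1} - z_t\| \leq \eta \|\delta_t\| \leq \eta L D$. Unrolling, applying the triangle inequality, and swapping the order of summation in the resulting double geometric sum then produces
\begin{flalign*}
    \sum_{t=1}^T \|e_t\| \leq \frac{\kappa \|e_1\|}{\gamma} + \frac{\kappa \eta L D T}{\gamma} \leq \frac{2\kappa D}{\gamma} + \frac{\kappa \eta L D T}{\gamma},
\end{flalign*}
and Cauchy--Schwarz together with Assumption \ref{ass:smooth-cost} gives $\sum_t \langle \delta_t, e_t\rangle \leq L D \sum_t \|e_t\|$.

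Summing the two bounds yields an expression of the form $\frac{c_1 D^2}{\eta} + c_2 \eta L^2 D^2 T + \frac{c_3 \kappa L D^2}{\gamma}$, and plugging in the prescribed step size $\eta = 2\gamma/(L\sqrt{T(1+4\kappa^2)})$ recovers the theorem's bound. The main obstacle I anticipate is the careful bookkeeping needed to extract the exact constants, and in particular the $\sqrt{1+4\kappa^2}$ factor: the balance between the OGD dispersion term and the $\kappa$-scaled tracking contribution has to be performed jointly (rather than by optimizing each piece in isolation) to match the precise form stated in the theorem.
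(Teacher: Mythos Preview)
Your proposal is correct and follows the same high-level decomposition as the paper (convexity, then OGD term plus tracking term), with the OGD piece handled identically. The difference lies in how you control the tracking error $e_t = x_t - z_t$. The paper expands $x_t$ explicitly, invokes the Neumann series identity $z_t = \sum_{\tau\ge 0} A^\tau(I-A)z_t$ to obtain
\[
x_t - z_t = A^{t-1}(x_1 - z_t) + \sum_{\tau=1}^{t-1} A^{\tau-1}(I-A)(z_{t-\tau}-z_t),
\]
and then needs an auxiliary lemma bounding $\|z_{t-\tau}-z_t\|\le \eta\tau LD$ together with the estimate $\|I-A\|\le 2\kappa$; this yields a per-step tracking bound of $2D\kappa(1-\gamma)^{t-1} + 2\eta\kappa^2 LD/\gamma^2$. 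Your route via the error recursion $e_{t+1}=Ae_t+(z_t-z_{t+1})$ is more direct: it only needs consecutive differences $\|z_{t+1}-z_t\|\le \eta LD$, avoids both the Neumann lemma and the $\|I-A\|$ factor, and produces the tighter aggregate bound $\sum_t\|e_t\|\le 2\kappa D/\gamma + \kappa\eta LDT/\gamma$. Since $\tfrac12+\tfrac{\kappa}{\gamma}\le \tfrac{1+4\kappa^2}{2\gamma^2}$ (using $\gamma\le 1$ and $\kappa\ge 1$), plugging in the prescribed $\eta$ still recovers the stated bound, so your argument in fact dominates the paper's constant-wise while being shorter.
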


\begin{proof}(Sketch) 
The convexity of costs gives us
\begin{flalign}
    \cR_x(T) & = \sum_{t=1}^T f_t(x_t) - f_t(x^*) \leq  \sum_{t=1}^T \delta_t^\intercal (x_t - x^*) \notag \\
    & = \sum_{t=1}^T \underbrace{\delta_t^\intercal (z_t - x^*)}_{(A)} + \underbrace{\delta_t^\intercal (x_t - z_t)}_{(B)} \label{eq:regret_analysis_1}
\end{flalign}
We will use two different analytic techniques to bound the two types of terms $(A)$ and $(B)$. Term $(A)$ captures how closely the sequence of target states $\{z_t\}_1^T$ generated by online gradient descent (Line \ref{lin:projection}) approximates the optimal state $x^*$. We can use standard online gradient descent (OGD) analysis techniques (see, e.g., \cite{hazan2022introduction}) to bound these terms. 


Term $(B)$ measures the tracking error between the actual system state $x_t$ and the target state $z_t$ . Under Assumption \ref{ass:strong_stab}, the system dynamics ensure exponential convergence of the actual state $x_t$ towards the target $z_t$. Additionally, the consecutive target states $z_t$ and $z_{t+1}$  generated by the algorithm remain close due to the gradient update step (Line \ref{lin:projection}), further controlling this tracking error. These two factors together allow us to bound these tracking error terms. 
The complete proof for this and other results can be found in Appendix \ref{app:proofs}.
\end{proof}

The benchmark employed in defining our regret $\cR_x(T)$ provides a valuable interpretation of the regret bound stated in Theorem \ref{thm:undist_regret_bound}. In particular, if the sequence of states $x_t$ were not governed by any dynamics but could instead be picked arbitrarily from the set $\cX$, our problem would reduce to an instance of the classical, `memory-less' online convex optimization problem. In that scenario, employing online gradient descent (OGD) would yield the \emph{optimal} regret bound of $1.5 LD^2 \sqrt{T}$ \cite{zinkevich_online_2003}. Thus, Theorem \ref{thm:undist_regret_bound} implies that the additional penalty (up to a constant additive term) we incur from learning in the presence of dynamics is captured through a multiplicative factor of  $\frac{4\sqrt{1+4\kappa^2}}{3\gamma}$.  
The constant additive term $\frac{\kappa LD^2}{\gamma}$ explicitly captures the small, yet non-zero, effect of the initial state of the system. 


\subsection{System with disturbance} \label{sec:disturbed_sys}
In the previous section, we considered the special case where the disturbances acting on our system (\ref{eq:LTI-sys-disturbed}) were absent. Let us now consider the setup when this is no longer the case and exploit the principle of superposition in LTI systems to re-write 
system (\ref{eq:LTI-sys-disturbed}) as
\begin{subequations}
    \begin{flalign}
        x_t & = \bar{x}_t + x_t^d \label{eq:superposition_x}\\
        \bar{x}_{t+1} & = A \bar{x}_t + B u_t \label{eq:superpos_undisturbed}\\
        x_{t+1}^d & = A x_t^d + w_t \label{eq:superpos_disturbance}
    \end{flalign}
\end{subequations}
where we assume $\bar{x}_1 = x_1$ and $x^d_1 = \mathbf{0}$. The `nominal' state $\bar{x}_t$ captures the state of the system without disturbances acting on it, while $x_t^d$ captures the state if it evolved in the presence of disturbances only. 

We can then also re-define the sequence of cost functions $\{f_t\}$ faced by the learner in terms of these nominal states as 
    \[ g_t(\bar x) \triangleq f_t(\bar x + x^d_t), \quad \forall \bar x \in \bR^n, \; \forall t.\]
This re-interpretation of the cost function formalizes the idea that both unknowns (the cost $f_t$ and the disturbances $\{w_\tau\}_{\tau=1}^t$), can be captured through a single unknown cost function $g_t$. 
Now, the problem we set out to solve -- the learner's \emph{original problem} -- is to control the disturbed system (\ref{eq:LTI-sys-disturbed}) when facing the cost sequence $\{f_t\}$. However, we can now define a related \emph{nominal problem}: controlling the undisturbed system (\ref{eq:superpos_undisturbed}) under the modified cost sequence $\{g_t\}$. The following proposition formalizes the equivalence of these two problems: 
\begin{prop} \label{prop:equiv_prob}
    For any sequence of inputs $\{u_t\}$, the regret incurred in the two problems are identical i.e. 
    \[\cR_u(T) = \cR_u^g(T) \triangleq \sum_{t=1}^T g_t(\bar{x}_t) - \min_{u\in \cU} \sum_{t=1}^T g_t(\bar{x}_t^u),\]
    where $\{\bar{x}_t^u\}$ denotes the trajectory of the system (\ref{eq:superpos_undisturbed}) under time-invariant input $u_t=u$.
\end{prop}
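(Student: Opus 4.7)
The plan is to verify the identity $\cR_u(T) = \cR_u^g(T)$ by checking that every cost evaluation appearing in $\cR_u(T)$ can be rewritten as the corresponding cost evaluation in $\cR_u^g(T)$. Because superposition is the only structural fact in play, the proof reduces to bookkeeping: (i) decompose the learner's trajectory under the chosen inputs, and (ii) decompose the benchmark trajectory under a candidate constant input in exactly the same way.

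First, I would handle the learner's cumulative cost. By the defining decomposition (\ref{eq:superposition_x}), the state reached by the algorithm satisfies $x_t = \bar{x}_t + x^d_t$ for every $t$, where $\bar{x}_t$ follows (\ref{eq:superpos_undisturbed}) with the learner's inputs and $\bar{x}_1 = x_1$. Applying the definition of $g_t$ then gives $f_t(x_t) = f_t(\bar{x}_t + x^d_t) = g_t(\bar{x}_t)$ for each $t$, so $\sum_t f_t(x_t) = \sum_t g_t(\bar{x}_t)$.

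Next, I would handle the benchmark. Fix an arbitrary $u \in \cU$ and consider the trajectory $\{x_t^u\}$ generated by applying $u_t \equiv u$ to the disturbed system (\ref{eq:LTI-sys-disturbed}) with initial state $x_1^u = x_1$. Since the disturbance sequence $\{w_t\}$ and the initial state are identical to those used in the learner's problem, the same superposition argument yields $x_t^u = \bar{x}_t^u + x_t^d$, where $\bar{x}_t^u$ is the trajectory of the undisturbed system (\ref{eq:superpos_undisturbed}) under the constant input $u$ starting from $\bar{x}_1^u = x_1$, and crucially the disturbance-driven component $x_t^d$ is the \emph{same} sequence as in the learner's decomposition (it depends only on $\{w_\tau\}_{\tau<t}$ and on $x_1^d=\mathbf{0}$, neither of which depends on $u$). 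Consequently $f_t(x_t^u) = g_t(\bar{x}_t^u)$ for each $t$ and each $u \in \cU$, so
\begin{equation*}
\min_{u\in\cU} \sum_{t=1}^T f_t(x_t^u) = \min_{u\in\cU} \sum_{t=1}^T g_t(\bar{x}_t^u).
\end{equation*}

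Subtracting the two identities yields $\cR_u(T) = \cR_u^g(T)$, which is the claim. There is no real obstacle; the only subtle point that needs emphasis is that $x_t^d$ does not depend on the applied input, so the same disturbance component appears in both decompositions and cancels consistently inside every $f_t(\cdot)$.
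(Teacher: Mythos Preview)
Your proposal is correct and matches the paper's own justification, which simply notes that $g_t(\bar x_t) = f_t(x_t)$ and $g_t(\bar x_t^u) = f_t(x_t^u)$ for all $t$. You have just written out the superposition bookkeeping behind those two identities in more detail, with the helpful emphasis that $x_t^d$ is input-independent; there is nothing to add or correct.
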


The proposition above follows from noting that $g_t(\bar x_t) = f_t(x_t)$ and $g_t(\bar x_t^u) = f_t( x_t^u)$ for all $t$. Proposition \ref{prop:equiv_prob} implies that if we design an online control policy that achieves sublinear regret for the \emph{nominal problem}, the resulting input sequence achieves good performance in the \emph{original problem} as well. Now, recall that this nominal problem - controlling a system without any disturbance when faced with adversarial costs - is precisely the setup we looked at in Section \ref{sec:undisturbed_sec}, and the online control policy (Algorithm \ref{alg:OLC}) we proposed required only the gradient of the cost functions as feedback in every step. Defining this gradient as $\bar{\delta}_t \triangleq \nabla g_t(\bar x_t)$, we can show that 
\vspace{-0.2cm}
\begin{flalign} \label{eq:equiv_grad}
    \bar{\delta}_t = \delta_t. \vspace{-0.1cm}
\end{flalign}
Since $g_t$ is obtained by composing $f_t$ with an affine transformation, (\ref{eq:equiv_grad}) follows from chain rule. With the description of the gradients of \emph{nominal} cost functions $\{ g_t\}$ offered by  (\ref{eq:equiv_grad}), we can state the main result of this section: 


\begin{thm} \label{thm:regret_bd_disturbed}
    Running Algorithm \ref{alg:OLC} on the nominal system with costs $\{g_t\}$ and gradients $\bar \delta_t$, and applying the resulting inputs
    $\{u_t\}$ to the original system yields the regret bound:
    \[\cR_u(T) \leq \frac{2LD^2}{\gamma}\bigg(\sqrt{T(1+4\kappa^2)}+2\kappa\bigg) \]
\end{thm}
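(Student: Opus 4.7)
My plan is to reduce the disturbed problem to the undisturbed one via the decomposition already set up in Section \ref{sec:disturbed_sys}, and then invoke the regret bound of Theorem \ref{thm:undist_regret_bound} together with Lemma \ref{lem:equiv-regret} applied to the nominal problem.

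First, by Proposition \ref{prop:equiv_prob}, the regret $\cR_u(T)$ against the best constant input for the original disturbed problem is equal to $\cR_u^g(T)$, the constant-input regret for the nominal problem: controlling the disturbance-free system (\ref{eq:superpos_undisturbed}) with trajectory $\{\bar x_t\}$ under the (unknown, adversarial) cost sequence $\{g_t\}$. Because of the gradient identity (\ref{eq:equiv_grad}), the inputs $\{u_t\}$ generated by Algorithm \ref{alg:OLC} fed with $\delta_t = \nabla f_t(x_t)$ are exactly the inputs the same algorithm would produce if it were run on the nominal system with feedback $\bar\delta_t = \nabla g_t(\bar x_t)$. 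So the setup to which we must apply our disturbance-free analysis is genuinely an instance of the problem studied in Section \ref{sec:undisturbed_sec}.

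Next, I would verify that Assumptions \ref{ass:strong_stab}--\ref{ass:smooth-cost} transfer to the nominal problem with the same constants $\kappa, \gamma, L, D$. Strong stability of $A$ is a system-level property and is unaffected. The nominal state $\bar x_t$ is bounded by $D$ since, for the chosen inputs, $\bar x_t = x_t - x_t^d$ and both $\|x_t\|$ and $\|x_t^d\|$ are bounded by constants dominated by $D$ (in fact $\|\bar x_t\| \le D$ already, since $\bar x_t$ is the state of a BIBO-stable system driven only by the bounded inputs $u_t \in \cU$ starting from $\bar x_1 = x_1$). For the smoothness bound on $g_t$, note that $\nabla g_t(\bar x) = \nabla f_t(\bar x + x_t^d)$, so whenever $\|\bar x\| \le D$ we have $\|\bar x + x_t^d\| \le D$ (possibly after enlarging $D$ to absorb the disturbance contribution, consistently with how $D$ was defined just after Assumption \ref{ass:input-bd}), and thus $\|\nabla g_t(\bar x)\| \le LD$. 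Hence the preconditions of Theorem \ref{thm:undist_regret_bound} hold for the nominal problem with the same constants.

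Applying Theorem \ref{thm:undist_regret_bound} to the nominal problem with the stated step size then gives
\begin{equation*}
    \cR_x^g(T) \;\triangleq\; \sum_{t=1}^T g_t(\bar x_t) - \min_{x\in\cX} \sum_{t=1}^T g_t(x) \;\leq\; \frac{2LD^2}{\gamma}\Bigl(\sqrt{T(1+4\kappa^2)} + \kappa\Bigr).
\end{equation*}
Since the nominal dynamics are disturbance-free, Lemma \ref{lem:equiv-regret}, also applied to the nominal problem, yields
\begin{equation*}
    \cR_u^g(T) - \cR_x^g(T) \;\leq\; \frac{2\kappa LD^2}{\gamma}.
\end{equation*}
Combining these two inequalities through the chain $\cR_u(T) = \cR_u^g(T) \le \cR_x^g(T) + \frac{2\kappa LD^2}{\gamma}$ gives exactly the claimed bound $\frac{2LD^2}{\gamma}\bigl(\sqrt{T(1+4\kappa^2)} + 2\kappa\bigr)$.

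The proof is essentially bookkeeping once the nominal-problem reduction is in place, so I do not anticipate a hard technical step; the most delicate point is simply confirming that the gradient, state, and smoothness bounds transfer from the original to the nominal problem with the same constants so that Theorem \ref{thm:undist_regret_bound} and Lemma \ref{lem:equiv-regret} can be applied as black boxes. If one wanted to be conservative, the two penalty terms of $\frac{\kappa LD^2}{\gamma}$ (one from the OCO-to-dynamics gap, one from Lemma \ref{lem:equiv-regret}) could be kept as separate additive constants before combining, which transparently yields the coefficient $2\kappa$ in the final bound.
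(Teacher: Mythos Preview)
Your proposal is correct and follows essentially the same route as the paper: verify that the nominal problem $(\bar x_t, g_t)$ satisfies the assumptions of Section~\ref{sec:undisturbed_sec} with the same constants, apply Theorem~\ref{thm:undist_regret_bound} to bound $\cR_x^g(T)$, then add the $\frac{2\kappa LD^2}{\gamma}$ from Lemma~\ref{lem:equiv-regret} and invoke Proposition~\ref{prop:equiv_prob}. One small simplification: the paper only needs $\|\nabla g_t(\bar x_t)\|\le LD$ at the actual trajectory point (where $\bar x_t + x_t^d = x_t$ so the bound is immediate), not at all $\bar x$ with $\|\bar x\|\le D$, so your hedging about ``possibly enlarging $D$'' is unnecessary.
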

 
While the regret bound in Theorem~\ref{thm:regret_bd_disturbed} does not explicitly depend on the magnitude of the disturbance, this dependence is implicitly captured in the constant $D$ which denotes a uniform bound on the system state under the dynamics~(\ref{eq:LTI-sys-disturbed}). This constant reflects the BIBO stability of the system and depends on the disturbance bound and initial conditions.

The algorithmic approach described in Theorem \ref{thm:regret_bd_disturbed} seems to suggest that we need to continually keep track of $x_t^d$ and generate the sequence of nominal costs $\{g_t\}$. But that is not the case. These nominal costs were introduced solely to motivate our solution approach which involved reducing the learning problem with adversarial disturbances to one without any. 
In practice, Algorithm~\ref{alg:OLC} can be run exactly as before: Equation~(\ref{eq:equiv_grad}) shows that the gradient used in the update step, $\nabla g_t(\bar x_t)$, is equal to the observed gradient $\nabla f_t(x_t)$, and 
 $g_t(\bar{x}_t) = f_t(x_t)$ as well. So, despite the reinterpretation, the algorithm requires no changes -- only the state update (Line~\ref{lin:x_update}) now evolves under disturbances according to~(\ref{eq:LTI-sys-disturbed}).




\section{Connection to Agarwal et al. } \label{sec:compare}
In this section, we briefly examine the connections and differences between our approach and that of Agarwal et al.~\cite{agarwal_online_2019} in addressing our problem. Their formulation is slightly more general in two respects: (i) they allow costs to depend explicitly on both the state and input, whereas we focus on dependence on state only, and (ii) they assume access to a strongly stabilizing controller $\cK$ (Definition 3.3, \cite{agarwal_online_2019}) instead of assuming that the system matrix $A$ is itself strongly stable. As noted in the introduction, our framework can be readily generalized to incorporate these aspects as well. Our assumptions are without much loss of generality and are made to facilitate a clear and concise presentation of our results. Accordingly, in drawing comparisons with their work, we will continue with the simpler setup used throughout this paper.


\subsection{Comparison of Policy Class} \label{para:policy-class-compare} The solution approach taken by Agarwal et al. \cite{agarwal_online_2019} considers inputs generated by the so called Disturbance Action Controller defined formally as:  
\begin{defn}[Disturbance Action Controller (DAC) \cite{agarwal_online_2019}] \label{defn:DAC}
Given a horizon $H$, sequence of past disturbances $\{w_\tau\}_{\tau<t}$, and sequence of matrices $M = \{M^{[0]},\dots,M^{[H-1]}\}$, a DAC chooses inputs
    \[u_t = \sum_{i=1}^H M^{[i-1]}w_{t-i}\quad \forall t\geq 0\]
where $w_t \triangleq \mathbf{0}$ for all $t\leq 0$. 
\end{defn}

The horizon length $H$ in the parametrization of such a controller serves two crucial purposes. One, it can be shown that the performance of every linear controller can be closely approximated by \emph{some} disturbance action controller, and that the accuracy of approximation increases with the length of this horizon (Lemma 5.2, \cite{agarwal_online_2019}). Two, having a finite `memory' horizon $H$ enables the application of results from Online Convex Optimization with Memory (OCO-M) \cite{Anava2015OnlineMistakes}. From this perspective, a shorter horizon is preferable, as the regret penalties associated with historical dependence scale with $H$. Consequently, the horizon length $H$ becomes a design parameter in the approach taken by Agarwal et al. \cite{agarwal_online_2019}, and is selected to be of order $\cO(\log T)$. 


In contrast, our method does not inherently rely on the framework of OCO-M, nor is it designed with the goal of approximating linear feedback controllers, which we argue to be somewhat ad-hoc as a performance benchmark for general convex costs. Hence, we do not require a controller with a fine-tuned memory horizon. 


\subsection{Comparison of Benchmark} At a first glance, the benchmark employed by Agarwal et al. \cite{agarwal_online_2019} appears to be the performance of the best stabilizing linear feedback controller. However, as pointed out in Section \ref{para:policy-class-compare}, it can be shown that the performance of any linear controller can be closely approximated by some \emph{disturbance action controller}, indicating that the DAC policy class is powerful enough to encompass the class of linear feedback controllers. Thus, the approach taken by Agarwal et al. \cite{agarwal_online_2019} achieves $\cO(\sqrt{T})$ regret against a stronger benchmark i.e. the performance of the best-in-hindsight \emph{direct action controller}. This benchmark can be described as the solution to the following optimization problem: 
\begin{flalign}
    \cB_M(T) \triangleq \max_{\substack{M \in \cM \\ u_t = \sum_{1}^H M^{[i-1]} w_{t-i}}} \; \sum_{t=1}^T g_t(\bar x_t) \label{eq:DAC-benchmark}
\end{flalign}
where the trajectory $\{\bar x_t\}_1^T$ is generated according to the dynamics in (\ref{eq:superpos_undisturbed}). Note that we are invoking Proposition \ref{prop:equiv_prob} in describing this benchmark in terms of the nominal costs $\{g_t\}$. 
While it is a bit difficult to analytically interpret the significance of this benchmark, the associated optimization problem is convex. In the following section, we present simulation results that aims to numerically capture its significance. 

As highlighted in (\ref{eq:reg-defn-u}), the benchmark in our work is the performance of the best time-invariant input. Once again using Proposition \ref{prop:equiv_prob}, we can describe this benchmark in terms of the nominal costs as
\begin{flalign} \label{eq:benchmark_u}
    \cB_u(T) \triangleq \max_{\substack{u \in \cU}} \; \sum_{t=1}^T g_t(\bar x_t^u)
\end{flalign} 
We also saw the following equivalence:
\begin{flalign*} 
      \cB_u(T) \stackrel{\text{Lemma \ref{lem:equiv-regret}}}{=} \max_{\substack{\bar x \in \cX }} \; \sum_{t=1}^T g_t(\bar x) + o(T)
\end{flalign*}
which provides an alternate interpretation for our benchmark, namely, ``given complete knowledge about the sequence of costs and the disturbances the learner will face, what `nominal' state should they aim to stabilize at". In the following section, we will numerically evaluate this benchmark as well and compare it against the one in (\ref{eq:DAC-benchmark}). 

\section{Simulations} \label{sec:simu}

We run all our simulations on the system with state and input matrices \vspace{-0.2cm}
\[A = \frac{1}{3.6}\begin{bmatrix}
    1 & 0.2 & 0 \\
    0 &  1 &  0.2 \\
    0.2 & 0 & 1 \\
\end{bmatrix} \text{ and } B = \begin{bmatrix}
    0 & 1 \\
    0 & 0 \\
    1 & 0
\end{bmatrix}.\]

\begin{figure}[ht]
     \centering
     \begin{subfigure}[b]{0.45\textwidth}
         \centering
         \includegraphics[width=\textwidth]{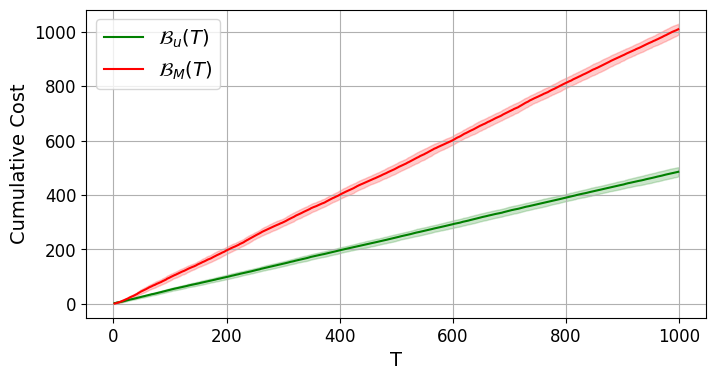}
         \caption{Comparison of benchmarks}
         \label{fig:bench_compare}
     \end{subfigure}
     \begin{subfigure}[b]{0.45\textwidth}
         \centering
         \includegraphics[width=\textwidth]{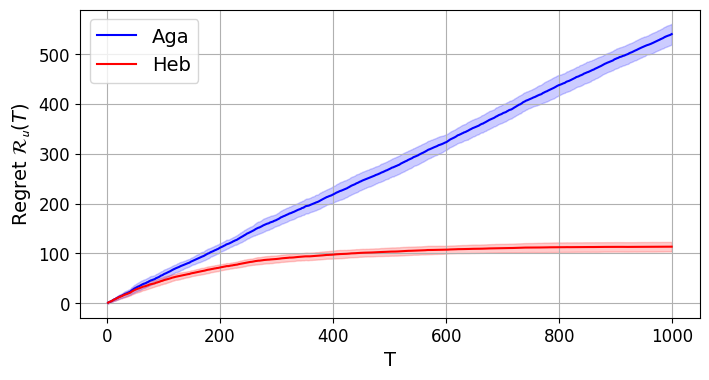}
         \caption{Regret against fixed-input Benchmark $\cB_u(T)$}
         \label{fig:regret_FI}
     \end{subfigure}
     \begin{subfigure}[b]{0.45\textwidth}
         \centering
         \includegraphics[width=\textwidth]{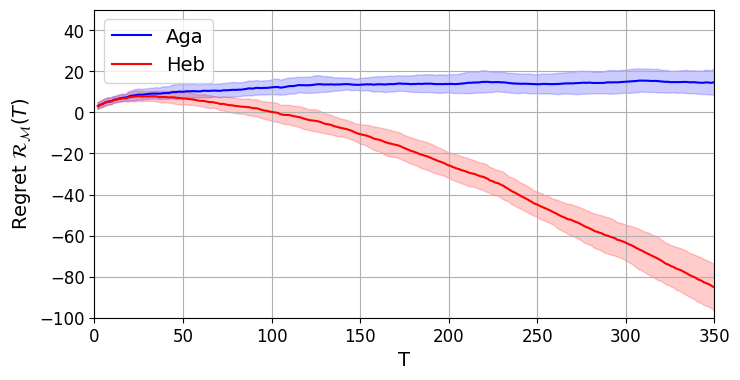}
         \caption{Regret against DAC benchmark $\cB_M(T)$}
         \label{fig:regret_DAC}
     \end{subfigure}
        \caption{Results of numerical simulations} 
        \label{fig:simu-results}
\end{figure}

The cost functions take the form 
\[f_t(x) = (x-c_t)^\intercal Q_t (x - c_t) \quad \forall \; 1\leq t \leq T\]
where the sequence of positive definite matrices $\{Q_t\}$ and `target' vectors $\{c_t\}$ are generated i.i.d from their respective distribution\footnote{For code and more details - github.com/vijeth27/onlineLinearControl.git}. The disturbances $\{w_t\}$ are also sampled i.i.d from a uniform distribution. 

We run both the proposed method (Algorithm \ref{alg:OLC}), referred to as \textbf{Heb}, and the online control method of Agarwal et al. \cite{agarwal_online_2019}, referred to as \textbf{Aga} for $T=1000$ time steps on the problem outlined above. We run $N=20$ iterations each with a newly sampled sequence of $\{Q_t\},\{c_t\}$ and $\{w_t\}$. 

Figure \ref{fig:bench_compare} compares the two benchmarks -- the cumulative cost $\cB_M(T)$ of the best-in-hindsight DAC policy, and the cumulative cost $\cB_u(T)$ of the best-in-hindsight fixed input. We observe that the best DAC policy is consistently outperformed by the best fixed input, suggesting that the latter can serve as a better point of comparison in certain online control scenarios.  

Figures \ref{fig:regret_FI} and \ref{fig:regret_DAC} depict the regret performance of both algorithms with respect to the two benchmarks described. Against the fixed input benchmark $\cB_u(T)$, our method \textbf{Heb} demonstrates sublinear regret growth, aligning with the theoretical guarantees presented in Theorem \ref{thm:regret_bd_disturbed}. In contrast, \textbf{Aga} exhibits significantly poorer performance. When we consider the DAC policy benchmark $\cB_M(T)$, \textbf{Aga} achieves sublinear regret consistent with the theoretical results of Agarwal et al. \cite{agarwal_online_2019}. On the other hand, \textbf{Heb} rapidly outperforms this benchmark, gradually incurring negative regret as the horizon increases.




\section{CONCLUSIONS AND FUTURE WORKS}

\paragraph{Conclusions} In this paper, we proposed a novel benchmark for evaluating online non-stochastic control methods: the performance of the best-in-hindsight fixed input. We showed that, in the absence of disturbances, this benchmark captures the state at which the system would stabilize to incur minimal cumulative cost. Thus, our proposed benchmark serves as a natural extension of the standard benchmark commonly used to define static regret in online convex optimization. Through numerical simulations, we also demonstrated that our proposed benchmark outperforms the Disturbance Action Controller (DAC) policy benchmark. By leveraging the equivalence established by Agarwal et al. \cite{agarwal_online_2019}, this also implies that our approach can surpass the performance of the best linear controller, the previously established benchmark for online non-stochastic control.

We also introduced a novel online control algorithm that achieves sublinear regret relative to our proposed benchmark. 
We presented theoretical guarantees on the performance of this algorithm and validated these guarantees through numerical simulations. Thus, our work advances the theory of online non-stochastic control by introducing a novel, conceptually meaningful performance metric and proposing an effective online control method to achieve competitive performance in online non-stochastic control scenarios.

\paragraph{Future Works} We are currently investigating extensions of our approach that impose additional structure on the cost functions, such as strong convexity, with the aim of obtaining logarithmic regret bounds. Another direction involves extending our framework to more realistic scenarios, including settings with safety constraints on the states or incomplete information about the system dynamics.

While our paper assumes a stable system matrix, our method can also be applied under the weaker assumption that the pair $(A,B)$ is merely stabilizable. In that case, however, we also require full observability of the system without which we cannot design a stabilizing linear feedback controller. Relaxing this assumption and considering setups with partial observability is another challenging avenue for further research.




\addtolength{\textheight}{-0cm}   


\bibliography{onlineLinCont}

\begin{thebibliography}{10}

\bibitem{li2021online}
Y.~Li, S.~Das, and N.~Li, ``Online optimal control with affine constraints,'' in {\em Proceedings of the AAAI Conference on Artificial Intelligence}, vol.~35(10), pp.~8527--8537, 2021.

\bibitem{shi_online_2020}
G.~Shi, Y.~Lin, S.-J. Chung, Y.~Yue, and A.~Wierman, ``Online {Optimization} with {Memory} and {Competitive} {Control},'' in {\em Advances in {Neural} {Information} {Processing} {Systems}}, vol.~33, pp.~20636--20647, Curran Associates, Inc., 2020.

\bibitem{nonhoff_online_2024}
M.~Nonhoff, E.~Dall'Anese, and M.~A. Müller, ``Online convex optimization for robust control of constrained dynamical systems,'' Nov. 2024.
\newblock arXiv:2401.04487 [eess].

\bibitem{jiang_online_2025}
N.~Jiang, S.~Hutchinson, and M.~Alizadeh, ``Online {Nonstochastic} {Control} with {Convex} {Safety} {Constraints},'' Jan. 2025.
\newblock arXiv:2501.18039 [math].

\bibitem{hazan_introduction_2025}
E.~Hazan and K.~Singh, ``Introduction to {Online} {Control},'' Mar. 2025.
\newblock arXiv:2211.09619 [cs].

\bibitem{agarwal_online_2019}
N.~Agarwal, B.~Bullins, E.~Hazan, S.~Kakade, and K.~Singh, ``Online {Control} with {Adversarial} {Disturbances},'' in {\em Proceedings of the 36th {International} {Conference} on {Machine} {Learning}}, pp.~111--119, PMLR, May 2019.
\newblock ISSN: 2640-3498.

\bibitem{agarwal_logarithmic_2019}
N.~Agarwal, E.~Hazan, and K.~Singh, ``Logarithmic {Regret} for {Online} {Control},'' in {\em Advances in {Neural} {Information} {Processing} {Systems}}, vol.~32, Curran Associates, Inc., 2019.

\bibitem{simchowitz_improper_2020}
M.~Simchowitz, K.~Singh, and E.~Hazan, ``Improper {Learning} for {Non}-{Stochastic} {Control},'' in {\em Proceedings of {Thirty} {Third} {Conference} on {Learning} {Theory}}, pp.~3320--3436, PMLR, July 2020.
\newblock ISSN: 2640-3498.

\bibitem{cassel_rate-optimal_2022}
A.~B. Cassel, A.~Peled-Cohen, and T.~Koren, ``Rate-{Optimal} {Online} {Convex} {Optimization} in {Adaptive} {Linear} {Control},'' {\em Advances in Neural Information Processing Systems}, vol.~35, pp.~7410--7422, Dec. 2022.

\bibitem{cassel2020bandit}
A.~Cassel and T.~Koren, ``Bandit linear control,'' {\em Advances in Neural Information Processing Systems}, vol.~33, pp.~8872--8882, 2020.

\bibitem{cohen_online_2018}
A.~Cohen, A.~Hasidim, T.~Koren, N.~Lazic, Y.~Mansour, and K.~Talwar, ``Online {Linear} {Quadratic} {Control},'' in {\em Proceedings of the 35th {International} {Conference} on {Machine} {Learning}}, pp.~1029--1038, PMLR, July 2018.
\newblock ISSN: 2640-3498.

\bibitem{li_online_2019}
Y.~Li, X.~Chen, and N.~Li, ``Online {Optimal} {Control} with {Linear} {Dynamics} and {Predictions}: {Algorithms} and {Regret} {Analysis},'' in {\em Advances in {Neural} {Information} {Processing} {Systems}}, vol.~32, Curran Associates, Inc., 2019.

\bibitem{hazan2022introduction}
E.~Hazan, {\em Introduction to online convex optimization}, ch.~5.
\newblock The MIT Press, 2~ed., 2022.

\bibitem{zinkevich_online_2003}
M.~Zinkevich, ``Online convex programming and generalized infinitesimal gradient ascent,'' in {\em Proceedings of the {Twentieth} {International} {Conference} on {International} {Conference} on {Machine} {Learning}}, {ICML}'03, (Washington, DC, USA), pp.~928--935, AAAI Press, Aug. 2003.

\bibitem{Anava2015OnlineMistakes}
O.~Anava, E.~Hazan, and S.~Mannor, ``{Online learning for adversaries with memory: Price of past mistakes},'' in {\em Advances in Neural Information Processing Systems}, vol.~2015-January, 2015.

\end{thebibliography}
\bibliographystyle{ieeetr}

\appendices

\section{Extensions to input dependent costs} \label{app:input-dependent}
We briefly sketch how our approach can be extended to settings where the cost functions depend explicitly on both the state and the input. That is, we consider cost functions of the form \[f_t:(x,u) \in \bR^N \times \bR^M \mapsto f_t(x,u) \in \bR.\] As before, we first consider the system without any disturbances. After incurring the cost $f_t(x_t,u_t)$ at time $t$, the learner observes the gradient $(\delta_t^x,\delta_t^u) = \big(\nabla_x f_t(x_t,u_t),\nabla_x f_t(x_t,u_t)\big).$ We redefine the steady state manifold (Definition \ref{def:ss_set}) to account for both the state and the corresponding input that stabilizes the system at this state: 
\[\cX_{u} \triangleq \big\{(x,u) \in \bR^N \times \cU | x = A x + B u \big\}\]
We can now define a natural extension of our notions of regret in (\ref{eq:regret_u_intro}) and (\ref{eq:regret_x_intro}) as 
\begin{flalign}
    \cR_{xu}(T) \triangleq \sum_{t=1}^T f_t(x_t,u_t) - \min_{(x,u) \in \cX_u^* } \sum_{t=1}^T f_t\big( x , u \big). \label{eq:regret_xu}
\end{flalign}
To minimize this regret, we propose a straightforward extension of Algorithm \ref{alg:OLC}. We initialize the iterates as $(z_1,x_1) \in \cX_u$ and replace the steps in Lines \ref{lin:invert_z} and \ref{lin:projection} with the update
\begin{flalign}
    (z_{t+1},u_{t+1} )& = \Pi_{\cX^*_u} \big( (z^t - \eta \delta_t^x,u^t - \eta \delta_t^u)  \big). 
\end{flalign}
Following the regret analysis used in Theorem \ref{thm:undist_regret_bound}, we can show that this modified algorithm yields sublinear regret with respect to \eqref{eq:regret_xu}. Specifically, we have 
\[\cR_{xu}(T) \in \cO\bigg(\frac{\kappa (L_u + L_x) D^2 \sqrt{T}}{\gamma}\bigg),\]
where $L_u$ and $L_x$ denote the smoothness parameters for the cost functions with respect to state and input, respectively. The extension of this approach to systems with adversarially generated disturbances follows the approach presented in Section \ref{sec:disturbed_sys}.


\section{Proofs} \label{app:proofs}
\subsection{Proof of Lemma \ref{lem:equiv-regret}}
Let $x^*\in \cX$ denote the state at which the cumulative cost $\sum_{t=1}^T f_t(x)$ is minimized and let $u^*$ be the corresponding steady-state input. Let $\{x_t^*\}$ denote the state trajectory starting from initial state $x_1$ under the time-invariant input $u_t=u^*$. Then, we have

\begin{flalign*}
    \cR(T) - \cR^u(T) & = \min_{u\in \cU} \sum_{t=1}^T f_t(x_t^u)- f_t(x^*) \\
    & \stackrel{}{\leq} \sum_{t=1}^T f_t(x_t^*) - f_t(x^*)
\end{flalign*}
where the final inequality results from the definition of $x^*_t$. The convexity and smoothness of $f$ gives us 
    \begin{flalign}
        \sum_{t=1}^T f_t(x^*_t) - f_t(x^*) \leq LD 
        \sum_{t=1}^T \|x^* - x^*_t\|. \label{eq:const-inp-proof1}
    \end{flalign} 
    Then, defining the error $e_t = x^* - x^*_t$, the dynamics under constant input gives us
    \begin{flalign*}
        e_{t+1} = A x^* + Bu^* - A x_t^* - Bu^* = A e_t = A^{t} e_1  
    \end{flalign*}
    where the final step follows by induction. The norm of error terms can then be bound as 
    \begin{flalign*}
        \|e_{t+1}\| \leq \|A^{t}\| \|e_1\| \leq \kappa (1-\gamma)^{t} \|e_1\|.  
    \end{flalign*}
    Our result then follows from substituting this norm bound back in (\ref{eq:const-inp-proof1}), from noting that \[\sum_{1}^T (1-\gamma)^{t-1} \leq \sum_0^\infty (1-\gamma)^t = \frac{1}{\gamma}.\]
    and by observing that $\|e_1\| \leq \|x^*\| + \|x_1^*\| \leq 2 D$. 
    \hfill $\blacksquare$
\subsection{Proof of Theorem \ref{thm:undist_regret_bound}}
We first present some lemmas to aid us in the regret analysis.

\begin{lem}(Neumann Series) \label{lem:neumann}
    Let $A\in \bC^{n\times n}$ with spectral radius $\rho(A)<1$. Then the Neumann series of $A$ converges and $\sum_{0}^\infty A^k = (I-A)^{-1}.$ 
\end{lem}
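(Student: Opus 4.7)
The plan is to establish two things in sequence: first, that the partial sums $S_N \deq \sum_{k=0}^N A^k$ form a Cauchy sequence in $\bC^{n\times n}$ (so the series converges in the operator norm topology), and second, that the limit satisfies $(I-A)\lim_{N\to\infty} S_N = I$. Invertibility of $I-A$ will be a byproduct of the second step, but can also be obtained directly: since $\rho(A)<1$, the value $1$ is not an eigenvalue of $A$, hence $\det(I-A)\neq 0$.

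For convergence, I would invoke Gelfand's formula, $\rho(A) = \lim_{k\to\infty}\|A^k\|^{1/k}$. Choose any $r$ with $\rho(A) < r < 1$. Then there exists $K$ such that $\|A^k\| \leq r^k$ for all $k \geq K$. This implies $\sum_{k=0}^\infty \|A^k\|$ converges by comparison with the geometric series $\sum r^k$, so the partial sums $S_N$ are absolutely Cauchy in the Banach space $(\bC^{n\times n},\|\cdot\|)$ and hence converge to some matrix $S \in \bC^{n\times n}$. The same bound also yields $\|A^{N+1}\| \to 0$, i.e., $A^{N+1} \to 0$ as $N\to\infty$.

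Next I would compute the telescoping product
\[
(I-A)S_N \;=\; \sum_{k=0}^N A^k \;-\; \sum_{k=0}^N A^{k+1} \;=\; I - A^{N+1}.
\]
Taking $N\to\infty$ and using continuity of matrix multiplication together with $A^{N+1}\to 0$, this yields $(I-A)S = I$. The same calculation with the factor on the right gives $S(I-A) = I$. Therefore $S = (I-A)^{-1}$, which is exactly the claim.

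I do not anticipate any real obstacle here; the only subtlety is justifying the use of Gelfand's formula (or, equivalently, the existence of a submultiplicative norm in which $A$ is a strict contraction). If a more elementary route is preferred, one can instead Schur-triangularize $A = UTU^{*}$ and bound $\|A^k\|$ via the entries of $T^k$, which are controlled by $\rho(A)^k$ up to a polynomial factor in $k$; this again produces a convergent geometric majorant and finishes the argument identically.
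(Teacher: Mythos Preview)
Your argument is correct and is precisely the standard textbook proof of the Neumann series identity. The paper, however, does not supply any proof of this lemma at all: it is stated as a classical auxiliary fact and used without justification in the regret analysis. So there is nothing to compare against; your proposal simply fills in a proof the authors chose to omit.
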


\begin{lem} \label{lem:diff-z}
    For any $t\geq 1$ we have, \vspace{-0.2cm}
    \[\|z^{t+\tau} - z^t\| \leq \eta \tau L \quad \forall \tau \geq 1.\]
\end{lem}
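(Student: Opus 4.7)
The plan is to leverage two standard ingredients: the non-expansiveness of the Euclidean projection onto the convex set $\cX$, and the uniform bound on the gradients $\{\delta_t\}$ that follows from Assumption \ref{ass:smooth-cost} together with the boundedness of the state trajectory. The strategy is to first bound the one-step displacement $\|z_{t+1} - z_t\|$, and then telescope via the triangle inequality to obtain the bound for arbitrary $\tau$.

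First I would observe that the update rule on Line \ref{lin:projection} guarantees $z_t \in \cX$ for every $t \geq 1$ (given the initialization $z_1 \in \cX$). In particular, $z_t$ is a fixed point of the projection, i.e.\ $\Pi_\cX(z_t) = z_t$. Using the non-expansiveness of $\Pi_\cX$, it then follows that
\[
\|z_{t+1} - z_t\| \;=\; \|\Pi_\cX(z_t - \eta \delta_t) - \Pi_\cX(z_t)\| \;\leq\; \|\eta \delta_t\| \;=\; \eta \|\delta_t\|.
\]

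Next, I would invoke Assumption \ref{ass:smooth-cost}, which guarantees $\|\delta_t\| = \|\nabla f_t(x_t)\| \leq LD$ whenever $\|x_t\| \leq D$. The latter bound is precisely the a priori bound on the state trajectory discussed right after Assumption \ref{ass:input-bd}, which holds by BIBO stability under strong stability of $A$ and boundedness of $\cU$, $\cW$. Combined with the previous step, this yields $\|z_{t+1} - z_t\| \leq \eta L D$.

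Finally, I would apply the triangle inequality to the telescoping sum
\[
z_{t+\tau} - z_t \;=\; \sum_{i=0}^{\tau-1} (z_{t+i+1} - z_{t+i}),
\]
obtaining
\[
\|z_{t+\tau} - z_t\| \;\leq\; \sum_{i=0}^{\tau-1} \|z_{t+i+1} - z_{t+i}\| \;\leq\; \eta\, \tau\, L D,
\]
which matches the statement of the lemma (reading $L$ in the statement as the effective Lipschitz constant $LD$ from Assumption \ref{ass:smooth-cost}). There is no real obstacle here; the only subtle point is making sure that the uniform gradient bound $\|\delta_t\| \leq LD$ is legitimately applicable at every step, which is why I would explicitly cite the a priori state bound established from strong stability and boundedness of the input/disturbance sets.
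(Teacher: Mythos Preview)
Your proof is correct and follows essentially the same approach as the paper: the paper bounds the one-step increment via the non-expansiveness (``contractivity'') of $\Pi_\cX$ together with Assumption~\ref{ass:smooth-cost}, and then proceeds by induction on $\tau$, which is exactly your telescoping argument written inductively. You also correctly identify the $L$ versus $LD$ discrepancy in the statement; the paper's own proof indeed obtains $\eta\tau LD$.
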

\begin{proof} For $\tau=1$ we have  
    \begin{flalign*}
        \|z^{t+1} - z^t\| = & \|\Pi_{\cX}(z^t - \eta \delta_t) - z^t\| \\
        \stackrel{(a)}{\leq} & \|(z^t - \eta \delta_t) - z^t\| \stackrel{(b)}{\leq } \eta L D
    \end{flalign*}
    where $(a)$ follows from contractivity of $\Pi_\cX$ and $(b)$ follows from Assumption \ref{ass:smooth-cost}. Suppose the proposition holds for some $\tau\geq 1$, we will use a similar analysis to show it also holds for $\tau+1$. We have
    \begin{flalign*}
        \|z^{t+\tau+1} - z^t\| = & \|\Pi_{\cX}(z^{t+\tau} - \eta \delta_{t+\tau}) - z^t\| \\
        \stackrel{(a)}{\leq} &\|(z^{t+\tau} - \eta \delta_{t+\tau}) - z^t\| \\ \stackrel{}{\leq}  & \|z^{t+\tau} - z^t\| + \eta \|\delta_t\| \stackrel{(b)}{\leq} (\tau+1) \eta L D
    \end{flalign*}
    where $(a)$ results from the contractivity of $\Pi_\cX$, and $(b)$ results from a combination of the inductive hypothesis and Assumption \ref{ass:smooth-cost}.
\end{proof}
Equipped with these lemmas, let us now continue with obtaining our regret bound.

\begin{proof} 
The convexity of $f_t$'s gives us
\begin{flalign}
    Regret(T) & = \sum_{t=1}^T f_t(x_t) - f_t(x^*) \leq  \sum_{t=1}^T \delta_t^\intercal (x_t - x^*) \notag \\
    & = \sum_{t=1}^T \underbrace{\delta_t^\intercal (z_t - x^*)}_{(A)} + \underbrace{\delta_t^\intercal (x_t - z_t)}_{(B)} 
\end{flalign}
We will use two different analytic techniques to bound the two types of terms $(A)$ and $(B)$. 

Our approach to bounding terms of the type $(A)$ closely follows the regret analysis technique \cite{hazan2022introduction} for the online gradient descent (OGD) learning algorithm. To begin, consider
\begin{flalign*}
    \|z_{t+1} - x^* \|^2 & = \big\|\Pi_{\cX}(z_t -\eta \delta_t) - x^* \big\|^2 \stackrel{(a)}{\leq}  \|z_t -\eta \delta_t - x^* \|^2 \\
    & = \|z_t - x^*\|^2 - 2 \eta \delta_t^\intercal (z_t-x^*) + \eta^2 \|\delta_t\|^2 
\end{flalign*}
where $(a)$ results from noting that $x^*\in \cX$ and from the contractivity of projections over convex sets. Re-arranging terms and recalling Assumption \ref{ass:smooth-cost} gives us 
\begin{flalign*}
    \delta_t^\intercal (z_t-x^*) &\leq  \frac{1}{2\eta}\big(\|z_t - x^*\|^2 -  \|z_{t+1} - x^*\|^2\big) + \frac{\eta}{2} L^2 D^2.
\end{flalign*}

Shifting our focus to terms of the form in $(B)$, we have
\begin{flalign*}
    & x_t - z_t = A^{t-1} x_1 + \sum_{\tau=1}^{t-1} A^{\tau-1} B u_{t-\tau} - z_t \\ 
    & \stackrel{(b)}{=} A^{t-1} x_1 + \sum_{\tau=1}^{t-1} A^{\tau-1} (I-A) z_{t-\tau} - z_t \\
    & \stackrel{(c)}{=} A^{t-1} x_1 + \sum_{\tau=1}^{t-1} A^{\tau-1} (I-A) z_{t-\tau} - \sum_{\tau=0}^\infty A^\tau (I-A) z_t \\
    & = A^{t-1} x_1 + \sum_{\tau=1}^{t-1} A^{\tau-1} (I-A) \big(z_{t-\tau}-z_t\big) \\ 
    & - A^{t-1} \sum_{\tau=0}^\infty A^\tau (I-A) z_t \\
    & \stackrel{(d)}{=}  A^{t-1} \big(x_1-z_t\big) + \sum_{\tau=1}^{t-1} A^{\tau-1} (I-A) \big(z_{t-\tau}-z_t\big)
\end{flalign*}
where $(b)$ follows from Line \ref{lin:invert_z} of Algorithm \ref{alg:OLC} and $(c,d)$ follows from noting that $\rho(A) = \rho(L) < 1$ and invoking Lemma \ref{lem:neumann}. Taking norms on both sides and employing triangle inequality and properties of matrix norms gives us
\begin{flalign*}
    & \|x_t - z_t\| \\
    & \leq  \|A^{t-1}\| \|x_1-z_t\| + \sum_{\tau=1}^{t-1} \|A^{\tau-1}\| \|I-A\| \|z_{t-\tau}-z_t\| \\
    & \stackrel{(e)}{\leq} \kappa (1-\gamma)^{t-1} \big(\|x_1\| + \|z_t \|\big) +\sum_{\tau=1}^{t-1} 
 2 \kappa^2  (1-\gamma)^{\tau-1}\eta \tau LD   \\
    & \stackrel{(f)}{\leq} 2 D \kappa (1-\gamma)^{t-1} + 2 \eta \kappa^2 LD \sum_{\tau=1}^{\infty} 
(1-\gamma)^{\tau-1}\tau \\
    & \stackrel{(g)}{=} 2 D \kappa (1-\gamma)^{t-1} + 2 \frac{\eta \kappa^2 LD}{\gamma^2} 
\end{flalign*}
where $(e)$ follows from the fact that 
\begin{flalign*}
    \|A^k\| & = \|H^{-1} J^{k} H\| \leq \|H\|\|H^{-1}\| \|J\|^k, \\
    \|I-A\| & = \|H^{-1} (I -J) H\| \leq \kappa \|I-J\| \leq 2 \kappa,
\end{flalign*}
and from Lemma \ref{lem:diff-z}. Inequality $(f)$ follows from the bound on the states of the system. Note that $z_t$ is the steady state of the system under constant input $u_t$. Equality $(g)$ results from summing up an AGP series. 

Let us now continue our regret analysis from (\ref{eq:regret_analysis_1}) using the bounds we obtained for terms $(A)$ and $(B)$. Using Cauchy-Schwarz and Assumption \ref{ass:smooth-cost} we have   
    \begin{flalign*}
        Regret(T)  \leq & \sum_{t=1}^T \textcolor{red}{\delta_t^\intercal (z_t - x^*)} + \textcolor{blue}{LD \|x_t - z_t\|} \\
        \leq &\sum_{t=1}^T \bigg( \textcolor{red}{\frac{1}{2\eta}\big(\|z_t - x^*\|^2 -  \|z_{t+1} - x^*\|^2\big)} \\
        + & \textcolor{red}{\frac{\eta}{2} L^2 D^2} + \textcolor{blue}{2 LD^2 \kappa (1-\gamma)^{t-1} + \frac{2\eta \kappa^2 L^2D^2}{\gamma^2}} \bigg) \\
        = &\textcolor{black}{\frac{1}{2\eta} \|z_1 - x^*\|^2} + \eta L^2 D^2 T \bigg(\textcolor{black}{\frac{1}{2}} + \textcolor{black}{\frac{2\kappa^2}{\gamma^2}}\bigg) \\
        & +  \textcolor{black}{2 L D^2 \kappa \sum_{t=0}^{T-1} (1-\gamma)^t} \\
        \stackrel{(h)}{\leq} \; & 2\frac{D^2}{\eta} + \eta L^2 D^2 T \frac{1+4\kappa^2}{2\gamma^2} +\frac{2LD^2 \kappa}{\gamma}.  
    \end{flalign*} 
    where inequality $(h)$ results from the bound on the state of the system and from noting that $\gamma\in (0,1]$. Setting $\eta = \frac{2\gamma}{L\sqrt{T(1+4\kappa^2)}}$ gives us the regret bound 
    \begin{flalign*}
        Regret(T)  \leq \frac{2 L D^2 \sqrt{T(1+4\kappa^2)}}{\gamma} +\frac{2LD^2 \kappa}{\gamma}.  
    \end{flalign*} 
    \end{proof}
\subsection{Proof of Theorem \ref{thm:regret_bd_disturbed}}
From (\ref{eq:equiv_grad}) and Assumption \ref{ass:smooth-cost} we see that the gradient of the nominal cost function satisfies
\[\|\nabla g_t(\bar x_t)\| = \|\nabla f_t(x_t)\| \leq LD.\]
 We also observe that $\|\bar x_t\|<D$ because of the BIBO stability of (\ref{eq:superpos_undisturbed}). So we see that $g_t$ and $\bar x_t$ satisfy all the assumptions we made on $f_t$ and $x_t$ respectively. So by running Algorithm \ref{alg:OLC} with the sequence of costs $g_t$ and system (\ref{eq:superpos_undisturbed}) allows us to invoke Theorem \ref{thm:undist_regret_bound} and state 

 \[\sum_{t=1}^T g_t(\bar{x}_t) - \min_{\bar x\in \cX} \sum_{t=1}^T g_t(\bar{x}) \leq \frac{2LD^2}{\gamma}\bigg(\sqrt{T(1+4\kappa^2)}+\kappa\bigg).\]
 The result then follows from invoking Lemma \ref{lem:equiv-regret} and from Proposition \ref{prop:equiv_prob}.

\end{document}